\theoremstyle{plain}
\newtheorem{theorem}{Theorem}[section]
\newtheorem{lemma}[theorem]{Lemma}
\newtheorem*{de-lemma}{Lemma}
\newtheorem{proposition}[theorem]{Proposition}
\newtheorem{corollary}[theorem]{Corollary}
\theoremstyle{remark}
\theoremstyle{definition}
\newcommand{\dd}{\mathrm{d}}
\newcommand{\R}{\mathbb{R}}
\begin{document}

\title{Minimal heteroclinics for a class of fourth order O.D.E. systems}

\author{Panayotis Smyrnelis}
\address{Centro
de Modelamiento Matem\'atico (UMI 2807 CNRS), Universidad de Chile, Casilla
170 Correo 3, Santiago, Chile.}
\email{psmyrnelis@dim.uchile.cl}
\thanks{P. Smyrnelis was partially supported by Fondo Basal CMM-Chile and Fondecyt postdoctoral grant 3160055}

\date{}

\maketitle
\begin{abstract}
We prove the existence of minimal heteroclinic orbits for a class of fourth order O.D.E. systems with variational structure. In our general set-up, the set of equilibria of these systems is a union of manifolds, and the heteroclinic orbits connect two disjoint components of this set.

\textbf{Key words:} Fourth order equations, systems of O.D.E., heteroclinic orbit, minimizer, variational methods.
\end{abstract}

\section{Introduction and main results}

Given a smooth nonnegative function $W:\R^m\times \R^m\to [0,\infty)$ ($m \geq 1$), we consider the system:
\begin{equation}\label{scalar}
\frac{\dd^4 u}{\dd x^4}+W_u(u, u')-W_{uv}(u, u') u'-W_{vv}(u, u') u''=0, \ u:\R \to \R^m, 
\end{equation}
which is the Euler-Lagrange equation of the energy functional: 
\begin{equation}\label{energy}
J_{\R}(u)=\int_{\R}\Big(\frac{1}{2}| u''|^2+W(u,u')\Big), \ u\in W^{2,2}_{\mathrm{loc}}(\R;\R^m). 
\end{equation}

In the scalar case $(m=1)$, setting $W(u,v)=\frac{1}{4}(u^2-1)^2+\frac{\beta}{2}v^2$, where $\beta>0$\footnote{In the case where $\beta<0$, the corresponding equation is known as the Swift-Hohenberg equation.}, 
we obtain the Extended Fisher-Kolmogorov equation
\begin{equation}\label{efk}
\frac{\dd^4 u}{\dd x^4}-\beta u''+u^3-u=0, \ u:\R\to\R,
\end{equation}
which was proposed in 1988 by Dee and van Saarloos \cite{dee} as a higher-order model equation for bistable systems.
Equation \eqref{efk} has been extensively studied by different methods: topological shooting methods, Hamiltonian methods, variational methods, and
methods based on the maximum principle (cf. \cite{bon1}, \cite{pel1}, and the references therein, in particular \cite{pel0}, \cite{pel00}, \cite{pel000}, and \cite{pel0000}).
In recent years, it has become evident that the
structure of solutions of \eqref{efk} is considerably richer than the structure of solutions 
of the Allen-Cahn O.D.E.:
\begin{equation}\label{accc}
 u''=u^3-u, \ u:\R\to\R,  
\end{equation}
or equivalently $u''=W'(u)$, with $W(u)=\frac{1}{4}(u^2-1)^2$. Depending on the value of $\beta$, we mention below some properties of the heteroclinic orbits\footnote{The existence of heteroclinic solutions of \eqref{efk} via variational arguments was investigated for the first time by L. A. Peletier, W. C. Troy
and R. C. A. M. VanderVorst \cite{pel2}, and W. D. Kalies, R. C. A. M. VanderVorst \cite{kal}.}
of \eqref{efk}, connecting at $\pm\infty$ the two equilibria $\pm 1$, in the sense that
\begin{equation}\label{hettc}
 \lim_{x\to\pm\infty}(u(x),u'(x),u''(x),u'''(x))=(\pm 1,0,0,0) \text{ in the phase-space}.
\end{equation}
When $\beta\geq\sqrt{8}$,\footnote{The linearization of \eqref{efk} at $\pm 1$ reads $\frac{\dd^4 v}{\dd x^4}-\beta v''+2v=0$. The four roots of the associated characterictic equation $\lambda^4-\beta\lambda^2 +2=0$ are all real if and only if $\beta\geq\sqrt{8}$.} the structure of bounded solutions of \eqref{efk} exactly mirrors that of \eqref{accc}. In particular,
\eqref{efk} has (up to translations) a unique heteroclinic orbit connecting $-1$ to $1$, which is monotone. However, as soon as $\beta$ passes the critical value $\sqrt{8}$ from above, an infinity of heteroclinics appears immediately, and these orbits are no longer monotone. Actually, they oscillate around the equilibria $\pm1$, and may jump from $-1$ to $1$ and back a number of times. Also note that as $\beta$ decreases from $\sqrt{8}$, these orbits continue
to exist up to $\beta = 0$, and even somewhat beyond. 

Another major difference between the second order model \eqref{accc} and \eqref{efk}, lies in the existence of {\em pulses} for $\beta<\sqrt{8}$, i.e. nontrivial solutions $u:\R\to\R$ of \eqref{efk} such that 
\begin{equation}\label{pulse}
 \lim_{|x|\to\infty}(u(x),u'(x),u''(x),u'''(x))=(1,0,0,0) \text{ or } (-1,0,0,0).
\end{equation}
This situation which is excluded for the scalar equation \eqref{accc}, may occur if we consider the system $u''=\nabla W(u)$ with a multiple well potential $W:\R^2\to[0,\infty)$ (cf. \cite[Remark 2.6]{antonop} and \cite[Section 2]{ps}).

A more general version of the canonical equation \eqref{efk} is given by 
\begin{equation}\label{efkgen}
\frac{\dd^4 u}{\dd x^4}-g(u)u''-\frac{g'(u)}{2}(u')^2+f'(u)=0, \ u:\R\to\R, \ W(u,v)=\frac{g(u)}{2}v^2+f(u), 
\end{equation}
where $f:\R\to\R$, and $g:\R\to\R$, are smooth functions (cf. \cite{bon0}, \cite{bon00}). For instance in \cite{bon00}, a double well potential $f\geq 0$ is considered, and $g$ is allowed to take negative values to an extent that is balanced by $f$. Provided that $\inf g$ is bigger than a negative constant depending on the nondegeneracy of the minima of $f$, the variational method can be applied to construct heteroclinics of \eqref{efkgen}.

The scope of this paper is to establish the existence of minimal heteroclinics for system \eqref{scalar} in a general set-up, similar to that considered in \cite{antonop} for the Hamiltonian system $u''=\nabla W(u)$.
In particular, we allow the function $W$ to vanish on submanifolds, and we are interested in connecting two disjoint subsets of minima of $W$.

We assume that $W\in C^2(\R^m\times\R^m;[0,\infty))$ is a nonnegative function such that
\begin{description}
\item[$\mathbf{H}_1$] The set $A:=\{u\in\R^m: W(u,0)=0\}$ is partitioned into two nonempty disjoint compact subsets $A^-$ and $A^+$.
\item[$\mathbf{H}_2$] There exists an open set $\Omega\subset\R^m$ such that
$A^-\subset\Omega$, $A^+\cap\overline \Omega=\emptyset$, and 
$W(u,v)>0$ holds for every $u\in\partial\Omega$,  and for every $v\in\R^m$.
\item[$\mathbf{H}_3$] $\liminf_{\vert u\vert\rightarrow\infty}W(u,v)>0$, uniformly in $v\in\R^m$.
\end{description}
In $\mathbf{H}_1$, we define the sets $A^-$ and $A^+$ that we are going to connect. On the other hand, Hypothesis $\mathbf{H}_2$ ensures that the energy required to connect a neighbourhood of $A^-$ to a neighbourhood of $A^+$ cannot become arbitrarily small. As a consequence an orbit with finite energy may travel from $A^-$ to $A^+$ and back, only a finite number of times (cf. Lemma \ref{l44}). Also note that $W$ is allowed to vanish if $u\notin\partial\Omega$, and $v\neq 0$. Finally, Hypothesis $\mathbf{H}_3$ is assumed to derive the boundedness of finite energy orbits (cf. Lemma \ref{uniff}).

Some typical examples of functions satisfying $\mathbf{H}_i$, $i=1,2,3$, 
are given by $W(u,v)=F(u)$, $W(u,v)=F(u)+\frac{\beta}{2}|v|^2$ (vector analog of \eqref{efk}), $W(u,v)=F(u)+\frac{G(u)}{2}|v|^2$ (vector analog of \eqref{efkgen}), 
where $F:\R^m\to[0,\infty)$ is a multiple well potential such that  $\liminf_{\vert u\vert\rightarrow\infty}F(u)>0$, $G:\R^m\to[0,\infty)$, and $\beta>0$.
In particular, our results apply to the system
\begin{equation}\label{bihar}
\frac{\dd^4 u}{\dd x^4}+\nabla F(u)=0, \ u:\R\to\R^m,
\end{equation}
to the vector Extended Fisher-Kolmogorov equation
\begin{equation}\label{efkv}
\frac{\dd^4 u}{\dd x^4}-\beta u''+\nabla F(u)=0, \ u:\R\to\R^m,\ \beta>0,
\end{equation}
or to
\begin{equation}\label{efkgenv}
\frac{\dd^4 u}{\dd x^4}-G(u)u''+\frac{\nabla G(u)}{2}|u'|^2+\nabla F(u)-(\nabla G(u)\cdot u')u'=0, \ u:\R\to\R^m. 
\end{equation}
Let $q \in \big(0,\frac{d(A^-,A^+)}{2}\big)$, be such that $$\{ u\in\R^m: d(u,A^-)\leq q\}\subset\Omega, \text{ and } \{ u\in\R^m: d(u,A^+)\leq q\}\cap\overline \Omega=\emptyset.$$ We define the class $\mathcal{A}$ by:
\[\mathcal{A}=\Big\{u\in W_{\rm loc}^{2,2}(\R;\R^{m}):\left.  \begin{array}{l} d( u(x), A^-) \leq q,\text{ for }x\leq x_u^-,\\
d( u(x), A^+)\leq q,\text{ for }x\geq x_u^+,\end{array}\right.\text{ for some }x_u^-<x_u^+\Big\},\]
where $d$ stands for the Euclidean distance. Note that no limitation is imposed on the numbers $x_u^-<x_u^+$ that may largely depend on $u$.
Our main theorem establishes the existence of a connecting minimizer in the class $\mathcal{A}$:
\begin{theorem}\label{conn-exists}
Assume $W$ satisfies $\mathbf{H}_i$, $i=1,2,3$. Then $J_{\R}(u)$ admits a minimizer $\bar{u}\in\mathcal{A}$:
\[J_{\R}(\bar{u})=\min_{u\in\mathcal{A}}J_{\R}(u)<+\infty.\]
Moreover it results that\footnote{The Existence of a minimizer $\bar u$ satisfying (ii) is ensured provided that $W$ is continuous 
(cf. the proof in Section \ref{sec:pfff}). On the other hand, the $C^1$ smoothness of $W$ and $W_u$ is required to establish properties (i), (iii) and (iv). }
\begin{itemize}
\item[(i)] $\bar u \in C^4(\R;\R^m)$ solves \eqref{scalar}
\item[(ii)]$\lim_{x\rightarrow \pm\infty}d(\bar{u}(x), A^\pm)=0$,
\item[(iii)] $\lim_{x\rightarrow \pm\infty}(\bar{u}'(x), \bar{u}''(x),\bar{u}'''(x))=(0,0,0)$,
\item[(iv)]$H:=\frac{1}{2}| \bar u''|^2-W(\bar u,\bar u')+W_v(\bar u,\bar u')\cdot\bar u'-\bar u'''\cdot \bar u'\equiv0$, $\forall x \in \R$. 
\end{itemize}
\end{theorem}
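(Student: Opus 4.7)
The plan is to apply the direct method of the calculus of variations, adapted to the fourth-order setting and to the fact that the admissible class $\mathcal{A}$ is not strictly translation-invariant but only invariant modulo relabeling of $x_u^\pm$. First I would establish $\inf_{\mathcal{A}}J_{\R}<+\infty$ by producing an explicit test function: pick $a\in A^-$, $b\in A^+$ and interpolate smoothly from $a$ to $b$ on $[0,1]$ with vanishing first and second derivatives at the endpoints, extending by the constants $a$, $b$ on each side. Since $W$ is continuous and the test function is bounded, its energy is finite.

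Next I would take a minimizing sequence $\{u_n\}\subset\mathcal{A}$ and normalize translations so that, say, $\sup\{x:d(u_n(x),A^-)\le q\}=0$, which is possible because $J_{\R}$ is translation invariant. Hypothesis $\mathbf{H}_3$, through the already-proved Lemma \ref{uniff}, supplies a uniform $L^\infty$ bound on $u_n$, and the energy bound $\int|u_n''|^2\le 2J_{\R}(u_n)\le C$ combined with this pointwise control yields uniform $W^{2,2}(I)$ bounds on every bounded interval $I$ (via the interpolation $\|u_n'\|_{L^2(I)}\lesssim \|u_n''\|_{L^2(I)}+\|u_n\|_{L^\infty(I)}$). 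A diagonal extraction then provides $u_n\rightharpoonup \bar u$ weakly in $W^{2,2}_{\mathrm{loc}}(\R;\R^m)$, with strong $C^1_{\mathrm{loc}}$ convergence by the compact Sobolev embedding. Weak lower semicontinuity of $\int\tfrac12|u''|^2$ combined with Fatou applied to $W(u_n,u_n')\to W(\bar u,\bar u')$ a.e.\ gives $J_{\R}(\bar u)\le\liminf J_{\R}(u_n)$.

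The delicate step, which I expect to be the main obstacle, is showing $\bar u\in\mathcal{A}$: one must prevent the transition region of $u_n$ from escaping to infinity along the minimizing sequence. This is exactly where $\mathbf{H}_2$ enters: because $\inf\{W(u,v):u\in\partial\Omega,\,v\in\R^m\}>0$, every crossing of $\partial\Omega$ has a fixed positive energy cost (formalized by the already-proved Lemma \ref{l44}), bounding the number of such crossings uniformly in $n$. Together with the normalization of Step 2, this pins the transition of $u_n$ to a bounded interval independent of $n$; passing to the $C^1_{\mathrm{loc}}$ limit then yields $\bar u\in\mathcal{A}$ and hence $\bar u$ is a minimizer.

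It remains to derive properties (i)--(iv). For (i), minimality furnishes the weak Euler-Lagrange equation \eqref{scalar}; since $W,W_u\in C^1$, a standard bootstrap on the quasilinear fourth-order ODE with $C^1$ coefficients in $\bar u,\bar u'$ gives $\bar u\in C^4(\R;\R^m)$. Property (ii) follows from $J_{\R}(\bar u)<\infty$, the $L^\infty$ bound, and the side-condition of $\mathcal{A}$ via a standard limit-point argument using $\mathbf{H}_1$ (any accumulation point of $\bar u$ at $\pm\infty$ must lie in $A$, and the $q$-neighbourhood constraint forces it into $A^\pm$). Property (iii) follows by combining the $L^2$-integrability of $\bar u''$ with the ODE \eqref{scalar}, which pushes the phase-space trajectory $(\bar u,\bar u',\bar u'',\bar u''')$ toward the equilibrium set and forces the three derivatives to vanish at $\pm\infty$. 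Finally for (iv), a direct computation using \eqref{scalar} shows $H'(x)\equiv 0$, so $H$ is a first integral of the autonomous system; the decay in (iii) together with $W(\bar u,\bar u')\to 0$ at $\pm\infty$ then forces the constant value of $H$ to be $0$.
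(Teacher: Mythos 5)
The overall skeleton of your proposal (explicit finite-energy competitor, a priori bounds from $\mathbf{H}_3$, diagonal extraction, weak lower semicontinuity plus Fatou, then Euler--Lagrange, regularity, and the Hamiltonian identity for (iv)) matches the paper. The genuine gap is in the step you yourself flag as the main obstacle: the choice of translations and the deduction that $\bar u\in\mathcal{A}$. Your normalization $\sup\{x:d(u_n(x),A^-)\le q\}=0$ pins the \emph{last} visit of $u_n$ to the $q$-neighbourhood of $A^-$, and your claim that a uniformly bounded number of $\partial\Omega$-crossings ``pins the transition of $u_n$ to a bounded interval independent of $n$'' does not follow: finitely many crossings can be separated by plateaus whose lengths diverge with $n$. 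Concretely, take $u_n$ close to $A^-$ on $(-\infty,-2n]$, transitioning to $A^+$ on $[-2n,-2n+1]$, staying near $A^+$ up to $x=-1$, dipping to distance exactly $q$ from $A^-$ at $x=0$, and near $A^+$ again for $x\ge 1$. This sequence lies in $\mathcal{A}$, satisfies your normalization, and has a bounded number of crossings, yet the actual heteroclinic transition escapes to $-\infty$ and the $C^1_{\mathrm{loc}}$ limit approaches $A^+$ at \emph{both} ends, hence is not in $\mathcal{A}$. Excluding such behaviour for minimizing sequences (or, rather, choosing translations for which it becomes harmless) is precisely the content of the paper's Lemma~\ref{l44}, which you invoke as ``already proved'' for a different purpose (the crossing-cost bound); in fact that lemma \emph{is} the existence statement, so it cannot be assumed.

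The paper's resolution is as follows: after showing via $\mathbf{H}_2$ (exactly as you suggest) that the number $N_k$ of alternations between the $q$-neighbourhoods of $A^\pm$ is uniformly bounded, it runs an induction on $N$ to extract a subsequence and indices $i_0\le j_0$ such that the alternation times satisfy $x_{2i_0-1}(k)-x_{2i_0-2}(k)\to\infty$, $x_{2j_0}(k)-x_{2j_0-1}(k)\to\infty$, while $x_{2j_0-1}(k)-x_{2i_0-1}(k)$ stays bounded. Translating by $x_{2i_0-1}(k)$ guarantees that the limit satisfies $d(\bar u,A^+)\ge q$ on $(-\infty,0]$ and $d(\bar u,A^-)\ge q$ on a neighbourhood of $+\infty$; an auxiliary lemma (finite energy together with boundedness and uniform continuity of $u$ and $u'$ forces $d(u(x),A^-\cup A^+)\to 0$ and $u'(x)\to 0$ at $\pm\infty$) then upgrades this to $\bar u\in\mathcal{A}$. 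You would need to supply this selection argument or an equivalent one. A secondary, smaller issue: your derivation of (iii) is only heuristic. The paper obtains $\bar u''\to 0$ from $\int_{\R}|\bar u''|^2<\infty$ combined with uniform continuity of $\bar u''$, which in turn requires local $L^2$ bounds on $\frac{\dd^4\bar u}{\dd x^4}$ (from the equation) and the interpolation inequality \eqref{interpol} to control $\bar u'''$; the limit $\bar u'''\to 0$ then follows from $\bar u''\to 0$ and $\frac{\dd^4\bar u}{\dd x^4}\to 0$.
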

An immediate consequence of Theorem \ref{conn-exists} is
\begin{corollary}\label{main2}
Assume that $A=\{a_1,\ldots, a_N\}$ for some $N\geq 2$, and given $a^-\in A$, set $A^-=\{a^-\}$ and $A^+= A\setminus\{a^-\}$.
Then under the assumptions of Theorem \ref{conn-exists}, there exists $a^+\in A^+$ such that the minimizer $\bar u$ satisfies
$ \lim_{x\to \pm\infty}\bar u(x)=a^\pm$.
\end{corollary}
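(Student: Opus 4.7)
The corollary is a short deduction from Theorem \ref{conn-exists}, essentially packaging property (ii) in the special case where $A$ is a finite set of isolated points. The minimizer $\bar u$ provided by the theorem satisfies $d(\bar u(x),A^-)\to 0$ as $x\to -\infty$; since $A^-=\{a^-\}$ is a single point, this is literally $\bar u(x)\to a^-$. The real task is therefore to show that $\bar u(x)$ converges as $x\to +\infty$ to some specific element of $A^+$, rather than oscillating indefinitely among the finitely many points of $A^+$.

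My plan for the positive end is to exploit the discreteness of $A^+$ together with the continuity of $\bar u$ guaranteed by Theorem \ref{conn-exists}(i) (which in fact gives $\bar u\in C^4$). First I would fix $r>0$ small enough that the open balls $B(a_i,r)$, $a_i\in A^+$, are pairwise disjoint; I would shrink $r$ further so that $r<\tfrac{1}{2}\min_{a_i\neq a_j\in A^+}|a_i-a_j|$, which guarantees that for any $u\in B(a_i,r)$ one has $d(u,A^+)=|u-a_i|$. By Theorem \ref{conn-exists}(ii), there exists $X\in\R$ such that
\[
\bar u(x)\in \bigcup_{a_i\in A^+} B(a_i,r) \quad \text{for all } x\geq X.
\]
Since $[X,\infty)$ is connected and $\bar u$ is continuous, the image $\bar u([X,\infty))$ is a connected subset of $\R^m$ lying inside a disjoint union of open balls, so it must be contained in a single one of them, say $B(a^+,r)$ for some $a^+\in A^+$. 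By the choice of $r$ we then have $|\bar u(x)-a^+|=d(\bar u(x),A^+)$ for all $x\geq X$, and Theorem \ref{conn-exists}(ii) yields $\bar u(x)\to a^+$, completing the proof.

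I do not anticipate a genuine obstacle: the argument is a purely topological consequence of the discreteness of $A^+$ and the continuity of $\bar u$, and it does not require properties (iii) or (iv) of the theorem. The only subtlety worth making explicit is the quantitative choice of $r$, which is needed twice, first to separate the balls $B(a_i,r)$ so that connectedness forces the trajectory into a single component, and second to identify $d(\bar u(x),A^+)$ with the distance to the selected center $a^+$.
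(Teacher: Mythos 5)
Your argument is correct and is precisely the (implicit) reasoning behind the paper's assertion that the corollary is ``an immediate consequence'' of Theorem \ref{conn-exists}: the paper gives no separate proof, and the connectedness-plus-disjoint-balls argument you spell out, together with property (ii), is exactly what makes it immediate.
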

By construction, the minimizer $\bar u$ of Theorem \ref{conn-exists} is a minimal solution of (\ref{scalar}), in the sense that
\[
J_{\mathrm{supp}\, \phi}(\bar u)\leq J_{\mathrm{supp}\, \phi}(\bar u+\phi)
\]
for all $\phi\in C^\infty_0(\R;\R^m)$. This notion of minimality is standard for many problems in which the energy of a localized solution is actually infinite due to non compactness of the domain. 
The {\em Hamiltonian} $H$ introduced in property (iv) of Theorem \ref{conn-exists}, is a constant function for every solution of \eqref{scalar}. In the case of system $u''=\nabla W(u)$, we have $H=\frac{1}{2}|u'|^2-W(u)$, and every heteroclinic orbit satisfies the equipartition relation $H=0\Leftrightarrow\frac{1}{2}|u'|^2=W(u)$. We also point out that in the general set-up of Theorem \ref{conn-exists}, the minimizer $\bar u$ is a heteroclinic orbit only in a weak sense, since $\bar u$ approaches the sets $A^\pm$ at $\pm\infty$, but the limits of $\bar u$ at $\pm\infty$ may not exist. In Section \ref{sec:cvvv}, we will study the asymptotic convergence of $\bar u$, and establish an exponential estimate under a convexity assumption on $W$ in a neighbourhood of the smooth orientable surfaces $A^\pm$. From this estimate, it follows that the limits of $\bar u$ exist at $\pm \infty$. As a consequence, in many standard situations, the orbit of $\bar u$ actually connects two points $a^\pm\in A^\pm$.  

The next Section contains the proof of Theorem \ref{conn-exists}. 
In contrast with \cite{antonop}, we avoid utilizing comparison arguments, since this method applied to higher order problems requires a lot of calculation. Indeed,
to modify $W^{2,2}$ Sobolev maps, we also have to ensure the continuity of the first derivatives. Two ideas in Lemma \ref{l44} are crucial in the proof of Theorem \ref{conn-exists}. Firstly, the fact that a finite energy orbit may travel from $A^-$ to $A^+$ and back, only a finite number of times in view of $\mathbf{H}_2$. Secondly, an inductive argument to consider appropriate translations of the minimizing sequence, and fix the loss of compactness issue due to the translation invariance of \eqref{scalar}.

\section{Proof of Theorem \ref{conn-exists}}\label{sec:pfff}
We first establish the following Lemmas:
\begin{lemma}\label{lem1}
There exists $u_0\in\mathcal{A}$ satisfying
\begin{equation}\label{j-bounded}
 J_\R(u_0)<+\infty.\end{equation}
\end{lemma}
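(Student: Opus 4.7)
The plan is to exhibit an explicit admissible test function by smoothly interpolating between a chosen point of $A^-$ and a chosen point of $A^+$, and then to check directly that it has finite energy.

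First, using $\mathbf{H}_1$, fix any $a^-\in A^-$ and any $a^+\in A^+$. Choose a cut-off $\phi\in C^\infty(\R;[0,1])$ with $\phi\equiv 0$ on $(-\infty,0]$ and $\phi\equiv 1$ on $[1,\infty)$, and set
\[
u_0(x):=(1-\phi(x))\,a^- + \phi(x)\,a^+.
\]
Then $u_0\in C^\infty(\R;\R^m)\subset W^{2,2}_{\mathrm{loc}}(\R;\R^m)$. For $x\leq 0$ we have $u_0(x)=a^-\in A^-$, so $d(u_0(x),A^-)=0\leq q$; for $x\geq 1$ we have $u_0(x)=a^+\in A^+$, so $d(u_0(x),A^+)=0\leq q$. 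Taking $x_{u_0}^-=0<1=x_{u_0}^+$ shows $u_0\in\mathcal{A}$.

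For the energy, note that $u_0''=\phi''\,(a^+-a^-)$ is continuous and supported in $[0,1]$, so $u_0''\in L^2(\R)$. Outside $[0,1]$, $u_0$ is a constant belonging to $A$ and $u_0'=0$, hence $W(u_0(x),u_0'(x))=W(u_0(x),0)=0$ by the definition of $A$. Consequently
\[
J_{\R}(u_0)=\int_0^1\Big(\tfrac12|u_0''|^2+W(u_0,u_0')\Big)\,\dd x,
\]
which is finite because $W$ is continuous and $(u_0(x),u_0'(x))$ lies in the compact set $\{u_0(x):x\in[0,1]\}\times\{u_0'(x):x\in[0,1]\}$.

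There is no real obstacle here: neither Hypothesis $\mathbf{H}_2$ nor $\mathbf{H}_3$ is needed, and only the nonemptiness of $A^\pm$ from $\mathbf{H}_1$ together with the basic regularity and nonnegativity of $W$ are used. The argument establishes $J_\R(u_0)<+\infty$ and hence \eqref{j-bounded}.
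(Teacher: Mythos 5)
Your proof is correct and follows essentially the same approach as the paper: both exhibit an explicit interpolation between a point of $A^-$ and a point of $A^+$ that is constant outside $[0,1]$, the only (immaterial) difference being that the paper uses the polynomial $2x^2-x^4$ as the transition profile while you use a smooth cut-off. The verification of membership in $\mathcal{A}$ and of finite energy is the same in both cases.
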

\begin{proof}
Indeed, let $a^\pm \in  A^\pm$ be such that $d(a^-,a^+)=d(A^-,A^+)$. We define
\begin{equation*}
u_0(x)=\begin{cases}
a^-,&\text{ for } x\leq 0,\\
a^-+(2x^2-x^4)(a^+ -a^-),&\text{ for }0 \leq x\leq 1,\\
a^+,&\text{ for } x\geq 1,
\end{cases}
 \end{equation*}
which clearly belongs to $\mathcal A$ and satisfies \eqref{j-bounded}.
\end{proof}
 From (\ref{j-bounded}) it follows that
 \[\inf_{u\in\mathcal{A}}J_{\R}(u)=\inf_{u\in\mathcal{A}_b}J_{\R}(u)<+\infty,\]
 where
 \[\mathcal{A}_b=\{ u\in\mathcal{A}:\ J_{\R}(u)\leq J_{\R}(u_0)\}.\]

\begin{lemma}\label{uniff}
The maps $u\in \mathcal A_b$ and their first derivatives are uniformly bounded.
In addition, the derivatives $u'$ of the maps $u\in\mathcal A_b$ are equicontinuous.
\end{lemma}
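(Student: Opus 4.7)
\bigskip
\noindent
\textbf{Proof proposal.} Write $M := J_\R(u_0)$, so that every $u \in \mathcal{A}_b$ satisfies $\|u''\|_{L^2(\R)}^2 \leq 2M$ and $\int_\R W(u,u') \leq M$. The plan is to combine these two bounds with $\mathbf{H}_3$ to pin $u$, then $u'$, to a bounded region of $\R^m$; equicontinuity then follows from a direct Cauchy--Schwarz estimate on $u''$.

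By $\mathbf{H}_3$, fix $R>0$ and $c>0$ with $W(u,v)\ge c$ whenever $|u|\ge R$, uniformly in $v\in\R^m$. The energy bound then forces $|\{x\in\R:|u(x)|\ge R\}|\le M/c$ for every $u\in\mathcal{A}_b$. Given any $x_0\in\R$, this measure estimate lets me select $x_1<x_0<x_2$ with $|u(x_i)|<R$ and $L:=x_2-x_1\in[2,\,2M/c+3]$, since each of the intervals $[x_0-M/c-2,\,x_0-1]$ and $[x_0+1,\,x_0+M/c+2]$ has length $M/c+1$, exceeding the measure of the exceptional set.

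On $[x_1,x_2]$ I then decompose $u=v+w$, where $v$ is the affine interpolant joining $(x_1,u(x_1))$ to $(x_2,u(x_2))$ and $w:=u-v$ vanishes at both endpoints. Since $w''=u''$, applying Rolle's theorem componentwise provides, for each coordinate $i$, a zero $\xi_i\in(x_1,x_2)$ of $w_i'$, and Cauchy--Schwarz then yields
\[ \|w'\|_{L^\infty([x_1,x_2])}\le\sqrt{L}\,\|u''\|_{L^2(\R)}\le\sqrt{2ML}, \qquad \|w\|_{L^\infty([x_1,x_2])}\le L\,\|w'\|_{L^\infty([x_1,x_2])}.\]
Combined with $\|v\|_{L^\infty}\le R$, $\|v'\|_{L^\infty}\le 2R/L$, and the two-sided control on $L$, this produces universal constants $C_1,C_2$, depending only on $M,c,R$, such that $|u(x_0)|\le C_1$ and $|u'(x_0)|\le C_2$. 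As $x_0$ is arbitrary, $u$ and $u'$ are uniformly bounded on $\R$ over $\mathcal{A}_b$.

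Equicontinuity of the derivatives is then immediate: for every $x\le y$,
\[ |u'(y)-u'(x)|=\Big|\int_x^y u''(s)\,\dd s\Big|\le\sqrt{2M(y-x)}\]
by Cauchy--Schwarz, so each $u'$ is $\tfrac{1}{2}$-H\"older with a constant independent of $u\in\mathcal{A}_b$. The main technical point is the careful calibration of $[x_1,x_2]$: too short a window would blow up the affine-interpolant contribution $2R/L$, while too long a window would blow up the Rolle--Cauchy-Schwarz contribution $\sqrt{L}\,\|u''\|_{L^2}$. The measure bound coming from $\mathbf{H}_3$ is precisely what allows a window of length of order unity, with controlled endpoints, to be chosen around every point $x_0$.
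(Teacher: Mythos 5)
Your argument is correct, and it takes a genuinely different route from the paper's. Both proofs rest on the same two ingredients --- $\|u''\|_{L^2(\R)}^2\le 2J_\R(u_0)$ and the measure bound $\mathcal L^1(\{|u|\ge R\})\le J_\R(u_0)/c$ coming from $\mathbf{H}_3$ --- and both obtain equicontinuity from the same Cauchy--Schwarz/H\"older estimate on $u'$. But where you proceed directly (around every $x_0$ you build a window $[x_1,x_2]$ of length bounded above and below, with endpoints in $B_R$, split $u$ into the affine interpolant plus a remainder vanishing at the endpoints, and control each piece via Rolle and the $L^2$ bound on $u''$), the paper argues by contradiction: it first bounds $u$ by showing that a large excursion $|u(x_2)|=L>R$ forces $u'\cdot\nu$ to be large positive at some $y_1$ and large negative at some $y_3$ with $y_3-y_1$ bounded, contradicting the H\"older estimate on $u'$; it then bounds $u'$ in a second step using the already-established $L^\infty$ bound on $u$. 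Your approach yields the bounds on $u$ and $u'$ simultaneously, with explicit constants and no contradiction structure, at the modest cost of the interpolation bookkeeping; the paper's is lighter on decompositions but needs two passes. Two harmless nitpicks: the upper bound on $L$ should be $2M/c+4$ rather than $2M/c+3$ (the window endpoints range over intervals of total spread $2M/c+4$), and when you sum the componentwise Rolle estimates you should note that $|w'(t)|^2=\sum_i|w_i'(t)|^2\le L\int_{x_1}^{x_2}|u''|^2$, so the constant $\sqrt{2ML}$ survives without any dimensional factor --- both points are cosmetic and do not affect the proof.
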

\begin{proof}
We first notice that the first derivative of a map $u\in\mathcal A_b$ is H\"{o}lder continuous, since by the Cauchy-Schwarz inequality we have
\begin{equation}\label{css0}
|u'(y)-u'(x)|\leq \Big(\int_{x}^{y}|u''|^2\Big)^{1/2}\sqrt{y-x}\leq \sqrt{2 J_\R(u_0)}\sqrt{y-x} , \ \forall x<y.
\end{equation}
This proves that the derivatives $u'$ of the maps $u\in\mathcal A_b$ are equicontinuous.

Next, we establish the uniform boundedness of the maps $u\in \mathcal A_b$. Let $R>0$ be large enough and such that $d(u, A^-\cup A^+)\leq q$ implies that $|u|<R$. According to Hypothesis $\mathbf{H}_3$, we can find a constant $w_R>0$ such that 
$W(u,v)\geq w_R$, for every $u \in \R^m$ such that $|u|\geq R$, and for every $v\in\R^m$. It follows that for every map $u\in \mathcal A_b$ we have 
$w_R\mathcal L^1(\{x\in \R: |u(x)|\geq R\})\leq\int_\R W(u)\leq  J_{\R}(u_0)$, where $\mathcal L^1$ denotes the one dimensional Lebesgue measure. As a consequence, if $u$ takes a value $u(x_2)=L \nu $ with $L>R$ and $\nu$ a unit vector, we can find an interval $x_1<x_2$ such that $|u(x_1)|=R$ and $|u(x)|\geq R$, $\forall t\in[x_1,x_2]$. Then, we have
$L-R\leq \int_{x_1}^{x_2} u'(x)\cdot \nu \, \dd x$, and this implies the existence of $y_1\in [x_1,x_2]$ such that 
such that $u'(y_1)\cdot \nu\geq  \frac{(L-R)}{x_2-x_1} \geq \frac{(L-R)w_R}{J_\R(u_0)}$. Similarly, we can find $x_3>x_2$ such that $|u(x_3)|=R$ and $|u(x)|\geq R$, $\forall t\in[x_2,x_3]$. As previously, there exists
$y_3\in[ x_2,x_3]$ such that $ u'(y_3)\cdot \nu \leq -\frac{(L-R)w_R}{J_\R(u_0)},$ and by construction $y_3-y_1\leq \frac{J_\R(u_0)}{w_R}$. Finally in view of \eqref{css0} we obtain
\begin{equation*}\label{css}
\frac{2(L-R)w_R}{J_\R(u_0)}\leq|u'(y_3)-u'(y_1)|\leq \sqrt{2 J_\R(u_0)}\sqrt{y_3-y_1}\leq J_\R(u_0)\sqrt{\frac{2}{w_R}},
\end{equation*}
and deduce that $L\leq M:= R+\frac{1}{\sqrt{2}}\frac{(J_\R(u_0))^2}{w_R^{3/2}}$, which proves the uniform bound for $u\in \mathcal A_b$. 

Now, suppose that $u'(x_0)=\Lambda\nu$
with $\Lambda >\sqrt{2J_\R(u_0)}$ and $\nu$ a unit vector.
Utilizing again \eqref{css0} we have
$u'(x)\cdot \nu \geq \Lambda -\sqrt{2J_\R(u_0)}$ for $x\in [x_0-1,x_0+1]$.
In particular since $\|u\|_{L^\infty}\leq M$, we conclude that
$2M\geq \int_{x_0-1}^{x_0+1}(u'(x)\cdot \nu)\dd x \geq 2(\Lambda -\sqrt{2J_\R(u_0)})$ which implies that
$\Lambda \leq M+\sqrt{2J_\R(u_0)})$. This completes the proof of Lemma \ref{uniff}.
\end{proof}
 
\begin{lemma}\label{limmm}
Let $u \in W_{\rm loc}^{2,2}(\R;\R^m)$ be such that $J_{\R}(u)\leq J_{\R}(u_0)$, and $u$ as well as $u'$ are bounded, and uniformly continuous. 
Then, 
\begin{equation}\label{limmm1}
\lim_{x\to\pm\infty}d(u(x),A^-\cup A^+)=0, \text{ and } \lim_{x\to\pm\infty}u'(x)=0.
\end{equation}
\end{lemma}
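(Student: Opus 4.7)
My plan is to show that every subsequential limit of $(u(x),u'(x))$ as $x\to+\infty$ (resp.\ $-\infty$) lies in $A\times\{0\}$; since $A=A^-\cup A^+$ is bounded, this at once yields both assertions. The case $x\to-\infty$ is symmetric, so I focus on $x\to+\infty$.

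Let $x_n\to+\infty$. Using the boundedness of $u$ and $u'$, extract a subsequence with $(u(x_n),u'(x_n))\to(u_\infty,v_\infty)$. For fixed $t\in\R$, Cauchy--Schwarz gives
\[
|u'(x_n+t)-u'(x_n)|\leq\Bigl(\int_{x_n}^{x_n+t}|u''|^2\Bigr)^{1/2}\sqrt{|t|}\longrightarrow 0,
\]
because $\int_{x_n-T}^{\infty}|u''|^2\to 0$ for every fixed $T$. This yields locally uniform convergence $u'(x_n+\cdot)\to v_\infty$, and integrating once gives $u(x_n+\cdot)\to u_\infty+\cdot\,v_\infty$ locally uniformly as well. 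Since the arguments of $W$ stay in a fixed compact set and $W$ is continuous, for any $T>0$
\[
\int_{x_n}^{x_n+T}W(u,u')\,\dd x=\int_0^T W(u(x_n+t),u'(x_n+t))\,\dd t\longrightarrow\int_0^T W(u_\infty+tv_\infty,v_\infty)\,\dd t.
\]
The left-hand side tends to $0$ because $W(u,u')\in L^1(\R)$, so the nonnegativity and continuity of $W$ force $W(u_\infty+tv_\infty,v_\infty)=0$ for all $t\in[0,T]$. Letting $T$ vary (and doing the analogous computation for $T<0$) establishes the identity on all of $\R$.

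Since $u$ is bounded, the affine map $t\mapsto u_\infty+tv_\infty$ must itself be bounded in $\R^m$, which forces $v_\infty=0$; then $W(u_\infty,0)=0$, so $u_\infty\in A$. Hence every subsequential limit of $(u(x),u'(x))$ as $x\to+\infty$ belongs to the closed set $A\times\{0\}$, proving $d(u(x),A)\to 0$ and $u'(x)\to 0$. The crux of the argument, and the only conceptually delicate step, is turning integrable energy on $\R$ into a pointwise identity along the whole limiting line: the $L^2$ bound on $u''$ forces the limiting trajectory to be exactly affine, after which the boundedness of $u$ collapses that line to a point of $A$. Observe that only Hypothesis $\mathbf{H}_1$ is used here (via the definition of $A$); $\mathbf{H}_2$ and $\mathbf{H}_3$ play no role in this lemma.
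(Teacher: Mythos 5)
Your proof is correct, but it is organized quite differently from the paper's. You run a single unified compactness argument: translate the trajectory to the points $x_n$, use the vanishing of the tail of $\int|u''|^2$ to show that $u(x_n+\cdot)$ converges locally uniformly in $C^1$ to an entire affine trajectory $t\mapsto u_\infty+tv_\infty$ on which $W$ vanishes identically, and then let the boundedness of $u$ collapse that line to a single point of $A$ (using that $A=A^-\cup A^+$ is compact, hence $A\times\{0\}$ is closed, to convert ``every subsequential limit lies in $A\times\{0\}$'' into the two stated limits). The paper instead gives two separate contradiction arguments with a ``quantum of energy over disjoint intervals'' flavour: if $u'(x_k)\to\lambda\nu\neq0$, it isolates disjoint intervals $I_k$ of length at most $4M/\lambda$ on which $u'\cdot\nu\geq\lambda/2$ and extracts a fixed positive lower bound $\lambda^3/(4^3M)$ for $\int_{I_k}|u''|^2$ from the H\"older estimate \eqref{css0}, contradicting $\int_\R|u''|^2<\infty$; and if $u(x_k)\to l\notin A$ with $u'(x_k)\to0$, it uses the uniform continuity of $x\mapsto W(u(x),u'(x))$ to charge a fixed amount $\delta W(l,0)$ of potential energy to disjoint windows around the $x_k$. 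The underlying ingredients (finiteness of the two pieces of the energy, boundedness of $u$, continuity and nonnegativity of $W$, the definition of $A$ in $\mathbf{H}_1$) are the same, and both proofs are complete. Your version is arguably cleaner in that it treats both conclusions in one pass and does not even need the uniform continuity hypothesis on $u'$ (you rederive the needed modulus of continuity from the $L^2$ bound on $u''$); the paper's version is more elementary, avoids extracting limiting trajectories, and its quantitative ``each excursion costs a fixed energy'' mechanism is the same device reused later in Lemma \ref{l44} to bound the number of transitions.
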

\begin{proof}
We first assume by contradiction that $\lim_{x\to\pm\infty}u'(x)=0$ does not hold. Without loss of generality, we consider a sequence $\{x_k\}$ such that
$\lim_{k\to\infty}x_k=+\infty$, 
and $\lim_{k\to\infty}u'(x_k)=\lambda \nu$, with $\lambda\neq 0$, and $\nu$ a unit vector.
Let $k_0$ be large enough, such that $u'(x_k)\cdot\nu\geq \frac{3\lambda}{4}$ for every $k\geq k_0$, and let $I_k=[a_k,b_k]$ be the largest interval containing $x_k$ and such that $u'\cdot\nu\geq\lambda/2$ holds on $I_k$. Since $\|u\|_{L^\infty}\leq M$, it is clear that $\mathcal L^1(I_k)\leq\frac{4M}{\lambda}$, where $\mathcal L^1$ denotes the one dimensional Lebesgue measure. Moreover, we have $u'(a_k)\cdot\nu=\lambda/2$. Applying \eqref{css0} in the interval $[a_k,x_k]$, it follows that
$\frac{\lambda^{3}}{4^3M}\leq \int_{I_k} |u''|^2$. Since, by passing to a subsequence if necessary, we can assume that the intervals $I_k$ are disjoint, this contradicts $J_{\R}(u)\leq J_{\R}(u_0)$.

Next, we assume by contradiction that $\lim_{x\to\pm\infty}d(u(x),A^-\cup A^+)=0$ does not hold. Without loss of generality, we consider a sequence $\{x_k\}$ such that
$\lim_{k\to\infty}x_k=+\infty$, 
$\lim_{k\to\infty}u(x_k)=l\notin A^- \cup A^+$, 
and $\lim_{k\to\infty}u'(x_k)=0$. 
Since $u$ as well as $u'$ are bounded and uniformly continuous, the function $x\to W(u(x),u'(x))$ is also uniformly continuous. 
In view of $\mathbf{H}_1$, there exists $\delta>0$ independent of $k$ such that
$W(u(x),u'(x))\geq W(l,0)/2>0$, for every $ x \in [x_k-\delta, x_k+\delta]$, and $k\geq k_0$ large enough. In particular we have $J_{[x_k-\delta,x_k+\delta]}(u)\geq \delta W(l,0)$, for $k=k_0, k_0+1,\ldots$.
Since, by passing to a subsequence if necessary, we can assume that the intervals $[x_k-\delta,x_k+\delta]$ are disjoint, we reach again a contradiction.
\end{proof}

\begin{lemma}\label{l44}
There exists $\bar u \in \mathcal A_b$ satisfying $J_{\R}(\bar{u})=\min_{u\in\mathcal{A}_b}J_{\R}(u)<+\infty$, and property (ii) of Theorem \ref{conn-exists}.
\end{lemma}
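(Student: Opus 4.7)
The plan is to apply the direct method of the calculus of variations to a translated minimizing sequence, using $\mathbf{H}_2$ to bound the number of transits across $\partial\Omega$ and then a cluster-decomposition/parity argument to repair the loss of compactness coming from the translation invariance of $J_\R$. Take $\{u_n\}\subset\mathcal{A}_b$ with $J_\R(u_n)\to I:=\inf_{\mathcal{A}_b}J_\R$; Lemma~\ref{uniff} gives uniform bounds $|u_n|\le M$ and $|u_n'|\le M'$. Since $W>0$ on the compact set $\partial\Omega\cap\{|u|\le M\}$ by $\mathbf{H}_2$, continuity yields $\delta,\epsilon>0$ with $W(u,v)\ge\epsilon$ whenever $d(u,\partial\Omega)\le\delta$, $|u|\le M$, and $|v|\le M'$. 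Each transit of $u_n$ across $\partial\Omega$ spends time at least $2\delta/M'$ inside this $\delta$-tube, so it contributes at least $\epsilon_0:=2\epsilon\delta/M'>0$ in energy; hence the transit count $k_n$ is bounded by $J_\R(u_0)/\epsilon_0$, and after a subsequence $k_n\equiv k$ is a fixed odd integer (odd because $u_n$ starts in $\Omega$ and ends outside $\overline\Omega$).

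Label the transit times $\sigma_n^{(1)}<\cdots<\sigma_n^{(k)}$. After extracting so that every difference $\sigma_n^{(j')}-\sigma_n^{(j)}$ has a limit in $(0,+\infty]$, the indices group into consecutive clusters $C_1,\ldots,C_p$ of sizes $m_i:=|C_i|$, with bounded intra-cluster separations and diverging gaps between distinct clusters. Picking $j_i\in C_i$, set $\tilde u_n^{(i)}(x):=u_n(x+\sigma_n^{(j_i)})$; by Lemma~\ref{uniff} and a diagonal Arzel\`a--Ascoli argument there is a single subsequence along which, for every $i$, $\tilde u_n^{(i)}\to\bar u^{(i)}$ in $C^1_{\mathrm{loc}}(\R)$ and $(\tilde u_n^{(i)})''\rightharpoonup(\bar u^{(i)})''$ weakly in $L^2_{\mathrm{loc}}$. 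With $S_i:=m_1+\cdots+m_i$ (so $S_0=0$, $S_p=k$), divergence of inter-cluster gaps implies that every fixed $x\in\R$ lies, for $n$ large, either between clusters $C_{i-1}$ and $C_i$ (where $u_n$ has completed $S_{i-1}$ transits) or between $C_i$ and $C_{i+1}$ ($S_i$ transits); since $u_n$ does not cross $\partial\Omega$ in between, the corresponding side of $\partial\Omega$ is determined by the parity of that count. Combined with Lemma~\ref{limmm} and $A^+\cap\overline\Omega=\emptyset$, this pins down $\bar u^{(i)}(x)\to A^-$ at $-\infty$ iff $S_{i-1}$ is even, and $\bar u^{(i)}(x)\to A^+$ at $+\infty$ iff $S_i$ is odd.

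Since $S_0=0$ is even and $S_p=k$ is odd, there is a first $i^*$ with $S_{i^*-1}$ even and $S_{i^*}$ odd; the previous paragraph then gives $\bar u^{(i^*)}\in\mathcal{A}$, so $J_\R(\bar u^{(i^*)})\ge I$. On the other hand, for every $T>0$ the intervals $[-T,T]+\sigma_n^{(j_i)}$ are eventually pairwise disjoint, hence
\[
\sum_{i=1}^p J_{[-T,T]}(\tilde u_n^{(i)})\le J_\R(u_n);
\]
weak lower semicontinuity on each compact and then $T\to\infty$ yield $\sum_i J_\R(\bar u^{(i)})\le I$. Each $\bar u^{(i)}$ still carries $m_i\ge1$ transits of $\partial\Omega$, so $J_\R(\bar u^{(i)})\ge\epsilon_0>0$; together with $J_\R(\bar u^{(i^*)})\ge I$ this forces $p=1$, and $\bar u:=\bar u^{(1)}$ then lies in $\mathcal{A}_b$, attains $J_\R(\bar u)=I$, and satisfies property~(ii) of Theorem~\ref{conn-exists} by the asymptotic analysis of the previous paragraph. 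The main obstacle I anticipate is precisely that asymptotic identification: because $W$ may vanish far from $\partial\Omega$, long wanderings of $u_n$ in between clusters cost no energy a priori, so confining $\bar u^{(i)}$ to a definite side of $\partial\Omega$ on each half-line genuinely requires combining the cluster structure with Lemma~\ref{limmm}.
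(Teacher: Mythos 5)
Your argument is correct in substance and reaches the same conclusion by the same overall strategy as the paper (energy quantization of boundary crossings via $\mathbf{H}_2$, translation of the minimizing sequence, Arzel\`a--Ascoli plus weak $L^2$ compactness and Fatou, and Lemma \ref{limmm} to upgrade ``stays on one side of $\partial\Omega$'' to property (ii)); the difference lies in how the good translation is selected. The paper tracks the successive arrival times $x_j(k)$ at the $q$-neighbourhoods of $A^{\mp}$ and runs a short induction to produce indices $i_0\leq j_0$ with one diverging gap on each side of a bounded block, then translates by $x_{2i_0-1}(k)$; you instead perform a full concentration--compactness decomposition of all transit times into clusters, use a parity argument to locate the cluster whose limit profile lies in $\mathcal{A}$, and add an energy-splitting step to force $p=1$. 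Your route is somewhat heavier but buys a cleaner global picture (it shows the minimizing sequence cannot fragment into several nontrivial profiles); note, though, that the splitting step is not actually needed for the minimality claim, since $J_{\R}(\bar u^{(i^*)})\leq I$ already follows from lower semicontinuity applied to the single translated sequence $\tilde u_n^{(i^*)}$ on exhausting compacts, and $\geq I$ from $\bar u^{(i^*)}\in\mathcal{A}$. One point requires more care than your write-up gives it: a ``transit across $\partial\Omega$'' must be defined as a full traversal of the $\delta$-tube $\{d(\cdot,\partial\Omega)\leq\delta\}$ from its inner to its outer boundary (or vice versa), not as a crossing of the surface $\partial\Omega$ itself --- otherwise the count can be infinite and the per-transit energy bound fails, since consecutive crossings of $\partial\Omega$ may occur without leaving the tube. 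With that definition, between consecutive transits the curve is only confined to $\overline{\Omega}\cup\{d(\cdot,\partial\Omega)\leq\delta\}$ or to its outer counterpart rather than to a strict side of $\partial\Omega$; this is still enough for your parity argument provided $\delta$ is chosen (as in the paper) so that the tube is disjoint from both $q$-neighbourhoods of $A^{\pm}$, so that Lemma \ref{limmm} forces convergence to the correct component. With these clarifications your proof goes through.
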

\begin{proof}
We consider a sequence $u_k\in\mathcal A_b$ such that $\lim_{k\to\infty}J(u_k)=\inf_{u\in\mathcal{A}_b}J_{\R}(u)$.
For every $k$ we define the sequence $$-\infty<x_1(k)<x_2(k)<\ldots<x_{2N_k-1}(k)<x_{2N_k}(k)=\infty$$
by induction:
\begin{itemize}
\item $x_1(k)=\sup\{ t\in\R: \, d(u_k(s),A^+)\geq q, \forall s\leq t\}<\infty$,
\item $x_{2i}(k)=\sup\{ t\in\R: \, d(u_k(s),A^-)\geq q, \forall s\in [x_{2i-1}(k),t]\}\leq\infty$,
\item $x_{2i+1}(k)=\sup\{ t\in\R: \, d(u_k(s),A^+)\geq q, \forall s\in [x_{2i}(k),t]\}<\infty$, if $x_{2i}(k)<\infty$,
\end{itemize}
where $i=1,\ldots$.
In addition, we set
\begin{itemize}
\item $y_{2i-1}(k)=\sup\{ t\leq x_{2i-1}(k): \, d(u_k(t),A^-)\leq q\}$, 
\item $y_{2i}(k)=\sup\{ t\leq x_{2i}(k): \, d(u_k(t),A^+)\leq q\}$, if $x_{2i}(k)<\infty$.
\end{itemize}

\begin{figure}[h]
\begin{center}
\includegraphics[scale=1]{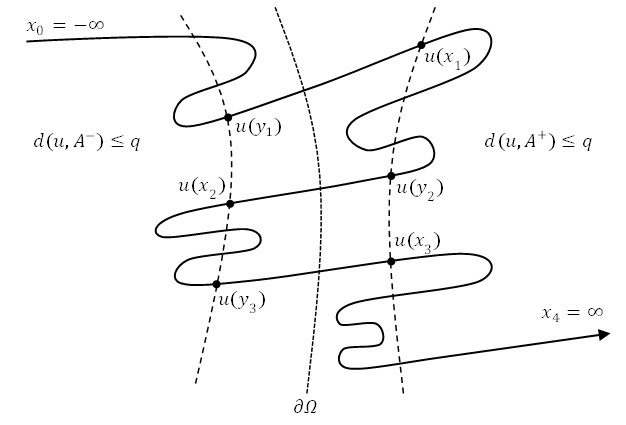}
\end{center}
\caption{The sequence $-\infty=x_0<y_1<x_1<y_2<x_2<\ldots<x_{2N}=\infty$, ($N=2$).} 
\label{fig}
\end{figure}
By Lemma \ref{uniff}, we have the uniform bounds $M:=\sup_k\|u_k\|_{L^\infty}<\infty$, and $\Lambda:=\sup_k\|u'_k\|_{L^\infty}<\infty$.
Let $\delta>0$ be such that
\begin{itemize}
\item $B_\delta(z)\cap\{ u\in\R^m: d(u,A^-)\leq q\}=\emptyset$, and $B_\delta(z)\cap\{ u\in\R^m: d(u,A^+)\leq q\}=\emptyset$, $\forall z\in\partial\Omega\cap B_M$,
\item $W(u,v)>0$ holds on $\{(u,v)\in B_M\times B_\Lambda: d(u,\partial\Omega\cap B_M)\leq \delta\}$ (cf. Hypothesis $\mathbf{H}_2$),
\end{itemize}
where $B_R(z)\subset \R^m$ denotes the closed ball of radius $R$ centered at $z\in\R^m$, and $B_R$ the closed ball of radius $R$ centered at the origin.

Next, we notice that in every interval $[y_j(k),x_j(k)]$ ($j=1,\ldots,2N_k-1$), 
there exists $z_j(k)\in [y_j(k),x_j(k)]$ such that $u_k(z_j(k))\in\partial\Omega$. Let $I_j(k)=[a_j(k),b_j(k)]$ 
be the largest interval containing $z_j(k)$, and such that $|u_k(x)-u_k(z_j(k))|\leq \delta $, $\forall x \in I_j(k)$. Since $|u_k(a_j(k))-u_k(z_j(k))|=\delta$,
and $|u_k(b_j(k))-u_k(z_j(k))|=\delta$, it is clear that
\begin{equation}\label{we1}
2\delta\leq \int_{a_j(k)}^{b_j(k)}|u'_k|\leq \Lambda(b_j(k)-a_j(k)),\nonumber
\end{equation}
and
\begin{equation}\label{we2}
\int_{a_j(k)}^{b_j(k)}W(u_k,u'_k)\geq w_\delta (b_j(k)-a_j(k))\geq w_\delta \frac{2\delta}{ \Lambda},\nonumber
\end{equation}
where $w_\delta:=\inf \{W(u,v): d(u,\partial\Omega\cap B_M)\leq \delta,\, |u|\leq M, \, |v|\leq \Lambda\}>0$.
Since the intervals $[a_j(k),b_j(k)]\subset[y_j(k),x_j(k)]$ are disjoint for every $j=1,\ldots,2N_k-1$, it follows that
\begin{equation}\label{we3}
(2N_k-1)w_\delta \frac{2\delta}{ \Lambda}\leq\int_{\R}W(u_k, u'_k)\leq J_{\R}(u_0),\nonumber
\end{equation}
and thus the integers $N_k$ are uniformly bounded. By passing to a subsequence, we may assume that $N_k$ is a constant integer $N\geq 1$. 

Our next claim is that up to subsequence, there exist an integer $i_0$ ($1\leq i_0\leq N$) and an integer $j_0$ ($i_0\leq j_0\leq N$) such that
\begin{itemize}
\item the sequence $x_{2j_0-1}(k)-x_{2i_0-1}(k)$ is bounded,
\item $\lim_{k\to\infty}(x_{2i_0-1}(k)-x_{2i_0-2}(k))=\infty$,
\item $\lim_{k\to\infty}(x_{2j_0}(k)-x_{2j_0-1}(k))=\infty$,
\end{itemize}
where for convenience we have set $x_0(k):=-\infty$. 

Indeed, we are going to prove by induction on $N\geq 1$, that given $2N+1$ sequences $-\infty\leq x_0(k)<x_1(k)<\ldots<x_{2N}(k)\leq\infty$, such that $\lim_{k\to\infty}(x_1(k)-x_0(k))=\infty$, and $\lim_{k\to\infty}(x_{2N}(k)-x_{2N-1}(k))=\infty$, then up to subsequence the properties (a), (b), and (c) above hold, for two fixed indices $1\leq i_0\leq j_0\leq N$.
When $N=1$, the assumption holds by taking $i_0=j_0=1$. Assume now that $N>1$, and let $l \geq 1$ be the largest integer such that
the sequence $x_{l}(k)-x_{1}(k)$ is bounded. Note that $l<2N$. If $l$ is odd, we are done, since the sequence $x_{l+1}(k)-x_{l}(k)$ is unbounded, 
and thus we can extract a subsequence $\{n_k\}$ such that $\lim_{k\to\infty}(x_{l+1}(n_k)-x_{l}(n_k))=\infty$. Otherwise $l=2m$ (with $1\leq m<N$), 
and the sequence $x_{2m+1}(k)-x_{2m}(k)$ is unbounded. We extract a subsequence $\{n_k\}$ such that 
$\lim_{k\to\infty}(x_{2m+1}(n_k)-x_{2m}(n_k))=\infty$.
Then, we apply the inductive statement with $N'=N-m$, to the $2N'+1$ sequences $x_{2m}(n_k)<x_{2m+1}(n_k)<\ldots<x_{2N}(n_k)$.

At this stage, we consider appropriate translations of the sequence $\{u_k\}$, by setting $\bar u_k(x)=u_k(x-x_{2i_0-1}(k))$. It is obvious that $\{\bar u_k\}$ is still a minimizing sequence.
In view of Lemma \ref{uniff} we obtain by the theorem of Ascoli via a diagonal argument that
$\lim_{k\to\infty }\bar u_k=\bar u$ in $C^1_{\mathrm{loc}}$ (up to subsequence).
On the other hand, since $\int_{\R}|\bar u''_k|^2\leq 2 J_{\R}(u_0)$ we deduce that $\bar u''_k\rightharpoonup \bar v$ 
weakly in $L^2(\R;\R^m)$ (up to subsequence). One can check that actually $\bar u \in W_{\rm loc}^{2,2}(\R;\R^m)$, and $\bar u''= \bar v$.
Finally, we have by lower semicontinuity
\begin{equation}\label{la1}
\int_{\R}|\bar u''|^2\leq\liminf_{k\to\infty} \int_{\R}|\bar u''_k|^2,
\end{equation}
and by Fatou's Lemma
\begin{equation}\label{la2}
\int_{\R}W(\bar u,\bar u')\leq\liminf_{k\to\infty} \int_{\R}W(\bar u_k,\bar u'_k). 
\end{equation}
It follows from \eqref{la1} and \eqref{la2} that
$J_{\R}(\bar{u})\leq\inf_{u\in\mathcal{A}_b}J_{\R}(u)<+\infty$. To complete the proof it remains to show that $\bar u \in \mathcal A$. 
Indeed, in the interval $[x_{2i_0-2}(k),x_{2i_0-1}(k)]$ we have $d(u_k(x),A^+)\geq q$, thus since $\lim_{k\to\infty}(x_{2i_0-1}(k)-x_{2i_0-2}(k))=\infty$, we deduce that $d(\bar u(x), A^+)\geq q$, for $x \leq 0$. Similarly,
in the interval $[x_{2j_0-1}(k),x_{2j_0}(k)]$ we have $d(u_k(x),A^-)\geq q$, thus since $\lim_{k\to\infty}(x_{2j_0}(k)-x_{2j_0-1}(k))=\infty$, 
while the sequence $x_{2j_0-1}(k)-x_{2i_0-1}(k)$ is bounded, it follows that $d(\bar u(x), A^-)\geq q$, in a neighbourhood of $+\infty$. To conclude, Lemma \ref{limmm} applied to $\bar u$, implies that $\lim_{x\to\pm\infty}d(\bar u(x),A^\pm)=0$, and thus $\bar u \in\mathcal A$.
\end{proof}
Now, we complete the proof of Theorem \ref{conn-exists}. By definition of the class $\mathcal A$, 
for every $\phi\in C^\infty_0(\R;\R^m)$, we have $\bar u+\phi\in\mathcal A$. Thus, the minimizer $\bar u$ satisfies the Euler-Lagrange equation:
\begin{equation}
\int_{\R}\Big(\bar u''\cdot\phi''+W_u(\bar u,\bar u')\cdot \phi+W_v(\bar u,\bar u')\cdot \phi'\Big)=0,\ \forall\phi\in C^\infty_0(\R;\R^m).
\end{equation}
This is the weak formulation of \eqref{scalar}. Since $W\in C^2(\R^m\times\R^m;\R)$, it follows that $\bar u \in C^4(\R;\R^m)$, and $\bar u$ is a classical solution of system \eqref{scalar}.

Next we establish property (iii). The limit $\lim_{x\rightarrow \pm\infty}\bar{u}'(x)=0$, is a consequence of Lemma \ref{limmm}. To see that $\lim_{x\rightarrow \pm\infty}\bar u''(x)=0$, we recall the interpolation inequality
\begin{equation}\label{interpol}
\int_a^{a+h}|v'|^2\leq C\Big(\int_a^{a+h}|v|^2+\int_a^{a+h}|v''|^2\Big), \ \forall v\in W^{2,2}([a,a+h];\R^m),
\end{equation}
that holds for a constant $C$ independent of $a\in\R$, and $h\in [1,\infty)$. In view of \eqref{scalar}, it is clear that
\begin{equation}\label{interpol2}
\int_a^{a+1}\Big|\frac{\dd^4 \bar u}{\dd x^4}\Big|^2\leq C_1+C_2\int_a^{a+1}|\bar u''|^2\leq C_1+2C_2 J_{\R}(\bar u)\leq C_3,
\end{equation}
where $C_i$ are constants independent of $a$. Moreover, applying \eqref{interpol} to $v=\bar u''$, we can find another constant $C_4$ independent of $a$, such that
\begin{equation}\label{interpol3}
\int_a^{a+1}|\bar u'''|^2\leq C_4,
\end{equation}
and as a consequence $\bar u''$ is uniformly continuous (see the proof of Lemma \ref{uniff}). Since $\int_{\R}|\bar u''|^2<\infty$ it follows that $\lim_{x\rightarrow \pm\infty}\bar u''(x)=0$. Finally, $\mathbf{H}_1$ and \eqref{scalar} imply that 
$\lim_{x\rightarrow \pm\infty}\frac{\dd^4 \bar u}{\dd x^4}(x)=0$. thus, we also have $\lim_{x\rightarrow \pm\infty}\bar u'''(x)=0$ (cf. \cite[\S 3.4 p. 37]{gilbarg}).

To prove property (iv), consider an arbitrary solution $u$ of \eqref{scalar}. By integrating the inner product of \eqref{scalar} by $u'$, 
one can see that the Hamiltonian $H:=\frac{1}{2}|  u''|^2-W( u, u')+W_v( u, u')\cdot  u'- u'''\cdot u'$ is constant along solutions.
In the case of the minimizer $\bar u$, the Hamiltonian is zero by properties (ii) and (iii), and by Hypothesis $\mathbf{H}_1$.

\section{Asymptotic convergence of the minimizer $\bar u$}\label{sec:cvvv}

A natural question arises in the case where the set $A^\pm$ defined in $\mathbf{H}_1$ are manifolds or union of manifolds: does the minimizer $\bar{u}$ converge to a point of $ A^+$ (respectively $A^-$) at $\pm \infty$?
Before answering this question, we are going to establish by a variational method the following exponential estimate:
\begin{proposition}\label{estimate}
Assume that $A^- \subset \R^m$ is a $C^2$ compact orientable surface with unit normal $\bf{n}$, and that $W$ satisfies 
\begin{equation}\label{nondeg}
\frac{\dd^2 W}{\dd s^2}(a+s\bf{n},s\nu)\Big|_{s=0} >0, \, \forall a \in A^-,\, \forall \nu\in \R^m \text{ such that } |\nu|=1.
\end{equation}
Then, the minimizer $\bar{u}$ constructed in Theorem \ref{conn-exists} satisfies $d(\bar{u}(x), A^-)\leq K e^{kx}$, and $|\bar{u}'(x)|\leq K e^{kx}$, $\forall x \leq 0$, for some constants $K,k>0$.
\end{proposition}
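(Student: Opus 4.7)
The plan is to prove exponential decay of the tail energy
\[
E(R):=\int_{-\infty}^{R}\Bigl(\tfrac12|\bar u''|^2+W(\bar u,\bar u')\Bigr)\dd x
\]
as $R\to-\infty$, and then derive the pointwise estimates. The strategy combines a uniform quadratic lower bound on $W$ near $A^-\times\{0\}$ with a variational cut-and-paste argument exploiting the minimality of $\bar u$. Since $A^-$ is a $C^2$ compact orientable surface, fix a tubular neighborhood $U_{\epsilon_0}=\{u\in\R^m:d(u,A^-)<\epsilon_0\}$ on which the nearest-point projection $\pi:U_{\epsilon_0}\to A^-$ is $C^1$ and every $u\in U_{\epsilon_0}$ decomposes as $u=\pi(u)+r(u)\mathbf{n}(\pi(u))$ with $|r(u)|=d(u,A^-)$. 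Properties (ii) and (iii) of Theorem \ref{conn-exists} provide $x_0$ such that for $x\le x_0$ one has $\bar u(x)\in U_{\epsilon_0/2}$ and $\bar u'(x),\bar u''(x),\bar u'''(x)$ are small; set $\rho(x):=r(\bar u(x))$ and $a(x):=\pi(\bar u(x))$.

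The first key step is to convert \eqref{nondeg} into a uniform quadratic lower bound
\[
W(u,v)\ge\lambda\bigl(d(u,A^-)^2+|v|^2\bigr),\qquad (u,v)\in U_{\epsilon_0}\times B_{\epsilon_0}(0),
\]
for some $\lambda>0$, using compactness of $A^-\times\SF^{m-1}$ and a Taylor expansion at $(a,0)$ with uniform remainder, reduced to the normal bundle to $A^-\times\{0\}$ via a Morse-lemma type argument (exploiting that $W$ and $\nabla W$ vanish on this set).

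The second step is the competitor argument. For $R\le x_0$ define $\tilde u:\R\to\R^m$ by $\tilde u=\bar u$ on $[R,\infty)$, $\tilde u\equiv a(R)\in A^-$ on $(-\infty,R-1]$, and the unique cubic polynomial on $[R-1,R]$ matching the values $(a(R),0)$ at $R-1$ and $(\bar u(R),\bar u'(R))$ at $R$. Then $\tilde u\in\mathcal A$ (the tail is in $A^-$) and on $[R-1,R]$ one has $|\tilde u-a(R)|,|\tilde u'|,|\tilde u''|\le C_0(\rho(R)+|\bar u'(R)|)$. Minimality of $\bar u$ in $\mathcal A$ yields
\[
E(R-1)\le J_{[R-1,R]}(\tilde u)\le C_1\bigl(\rho(R)^2+|\bar u'(R)|^2\bigr).
\]

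Integrating the quadratic minorant gives $\lambda\int_R^{R+1}(\rho^2+|\bar u'|^2)\le E(R+1)-E(R)$; combined with the uniform continuity of $\bar u$ and $\bar u'$ (Lemma \ref{uniff} and Theorem \ref{conn-exists}(iii)), this produces the pointwise bound $\rho(R)^2+|\bar u'(R)|^2\le C_2(E(R+1)-E(R))$. Feeding this into the competitor inequality gives $E(R-1)\le\theta\,E(R+1)$ with $\theta=C_1C_2/(1+C_1C_2)<1$, hence by iteration $E(R)\le K_0 e^{2kR}$ for some $k>0$; the claim $d(\bar u(x),A^-)+|\bar u'(x)|\le K e^{kx}$ follows from a final application of $\rho(x)^2+|\bar u'(x)|^2\le C_2(E(x+1)-E(x-1))$. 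The main obstacle is the quadratic minorant: \eqref{nondeg} asserts Hessian positivity at $(a,0)$ only along the one-parameter curves $s\mapsto(a+s\mathbf{n},s\nu)$, not full positive-definiteness of $D^2W(a,0)$ on the normal bundle to $A^-\times\{0\}$; the reduction is delicate and may require invoking the equipartition identity (iv) of Theorem \ref{conn-exists} to trade tangential $|\bar u'|^2$-terms for $|\bar u''|^2$-terms in the energy.
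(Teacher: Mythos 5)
Your overall strategy is the same as the paper's: a two\--sided quadratic bound on $W$ near $A^-\times\{0\}$, a cut\--and\--paste competitor that replaces the tail of $\bar u$ by an interpolation to the nearest point of $A^-$, minimality to bound the tail energy by $d^2(\bar u(R),A^-)+|\bar u'(R)|^2$, and a Gronwall\--type iteration. The paper runs the iteration in continuous form: with $\theta(x)=\int_{-\infty}^x\bigl(d^2(\bar u,A^-)+|\bar u'|^2\bigr)$ it gets $m\,\theta(x)\le C\,\theta'(x)$ directly from the competitor, hence $\theta(x)\le\theta(x_0)e^{c(x-x_0)}$, and only \emph{at the very end} converts integral decay into pointwise decay via a Lipschitz\--function lemma (its Lemma \ref{explem}), accepting the loss of a factor $1/2$ in the rate.

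There is a genuine gap in your discrete version at the step
\[
\rho(R)^2+|\bar u'(R)|^2\le C_2\bigl(E(R+1)-E(R)\bigr).
\]
Uniform (even Lipschitz) continuity cannot produce a \emph{linear} pointwise bound of a nonnegative function by its integral over a unit interval. What Lipschitz continuity of $f=\rho^2+|\bar u'|^2$ gives is $f(R)\le C\bigl(\int_{R-1}^{R+1}f\bigr)^{1/2}$ (exactly the content of the paper's Lemma \ref{explem}); a linear bound would require something like $|f'|\le Cf$, i.e.\ $|\bar u''|\lesssim(\rho^2+|\bar u'|^2)^{1/2}$ pointwise, which is not available. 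With the square root, your contraction degenerates to $E(R-1)\le C\,E(R+1)^{1/2}$, which is useless since $E\to 0$: it does not yield geometric decay. The repair is to keep the iteration entirely at the level of integrals: since $E$ is nondecreasing, integrate the competitor inequality $E(R_0-1)\le E(R-1)\le C_1\bigl(\rho(R)^2+|\bar u'(R)|^2\bigr)$ over $R\in[R_0,R_0+1]$ and use the quadratic minorant to get $E(R_0-1)\le (C_1/\lambda)\bigl(E(R_0+1)-E(R_0-1)\bigr)$, hence a true contraction; only after obtaining $E(R)\le K_0e^{2kR}$ should you invoke the Lipschitz/square\--root lemma to deduce the pointwise bounds with rate $k$ (or $k/2$).

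On the quadratic minorant $W\ge\lambda(d^2(u,A^-)+|v|^2)$: your worry is legitimate — condition \eqref{nondeg} tests $D^2W(a,0)$ only on the rays $(\mathbf{n},\nu)$ with $|\nu|=1$, and in particular does not by itself force positivity in the pure $v$\--directions $(0,\nu)$ (e.g.\ $W(u,v)=F(u)$ satisfies \eqref{nondeg} but not $W(a,v)\ge\lambda|v|^2$). The paper simply asserts the two\--sided bound \eqref{tube} as a consequence of \eqref{nondeg} without elaboration, so this is a shared issue rather than a defect of your argument; but you should be aware that the published proof does not supply the ``Morse\--lemma type'' reduction you correctly identify as the delicate point, and it does not use the equipartition identity for this purpose.
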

\begin{proof}
In view of \eqref{nondeg}, there exists $\lambda>0$ small enough, such that 
$\mathcal{U}:=\{u\in \R^m: d(u,A^-)<\lambda\}$ is a tubular neighbourhood  of $ A^-$ (cf \cite{docarmo}), and moreover 
\begin{equation}\label{tube}
m (d^2(u,A^-)+|v|^2)\leq W(u,v) \leq M  (d^2(u, A^-)+|v|^2), \ \forall u \in \mathcal{U}, \, \forall v\in\R^m: \, |v|<\lambda,
\end{equation}
for some constants $0<m<M$.
Let $x_0$ be such that $d(\bar{u}(x),A^-)<\lambda/8$, and $|\bar u'(x)|<\lambda/4$, $\forall x\leq x_0$. For fixed $x \leq x_0$, we set $\phi(x):=\bar u(x)-\frac{1}{2}\bar u'(x)$. One can see that $\phi(x)\in \mathcal U$, since actually 
$d(\phi(x),A^-)\leq d(\bar u(x),A^-)+\frac{1}{2}|\bar u'(x)|<\lambda/4$. We also introduce the point $a(x) \in  A^-$ such that $d(\phi(x),a(x))=d(\phi(x), A^-)$. Next we define the comparison map
\begin{equation}\label{comparison3}
z(t)=\begin{cases}
\bar{u}(x)+\big((t-x)+\frac{(t-x)^2}{2}\big)\bar u'(x) &\text{ for } x-1\leq t \leq x,\\
\phi(x)+(2(t-x+1)^2-(t-x+1)^4)(a(x)-\phi(x)) &\text{ for } x-2\leq t \leq x-1,\\
a(x) &\text{ for }  t \leq x-2.
\end{cases}\end{equation}
An easy computation shows that 
\begin{itemize}
\item[(a)] $z\in W^{2,2}_{\mathrm{loc}}((-\infty,x];\R^m)$,
\item[(b)] $z(x)=\bar u(x)$, and $z'(x)=\bar u'(x)$,
\item[(c)] $d(z(t),A^-)\leq d(\bar u(x),A^-)+\frac{1}{2}|\bar u'(x)|<\lambda/4$, $\forall t\leq x$,
\item[(d)] $|z'(t)|\leq 4d(\bar u(x),A^-)+2|\bar u'(x)|<\lambda$, $\forall t\leq x$,
\item[(e)] $J_{(-\infty,x]}(z)\leq C(d^2(\bar u(x),A^-)+|\bar u'(x)|^2)$, for a constant $C>0$ independent of $x$.
\end{itemize} 
At this stage, we set $\theta(x):=\int_{-\infty}^x (d^2(\bar{u}(t), A^-)+|\bar u'(t)|^2) \dd t$, and it is clear that $\theta'(x)=d^2(\bar{u}(x), A^-)+|\bar u'(x)|^2$.
The variational characterization of $\bar{u}$, \eqref{tube}, and (e) above, imply that
\begin{equation}\label{comput1}
m \theta(x) \leq \int_{-\infty}^x W(\bar u(t),\bar u'(t)) \dd t\leq J_{(-\infty,x]}(\bar{u})\leq J_{(-\infty,x]}(z)\leq C\theta'(x).
\end{equation}
After an integration of inequality \eqref{comput1}, we obtain that $\theta(x)\leq \theta(x_0)e^{c(x-x_0)}$, for every $x\leq x_0$, and for some constant $c>0$.
Since the functions $x\to d^2(\bar u(x), A^-)$, and $x\to |\bar u'(x)|^2$ are Lipschitz continuous (cf. Theorem \ref{conn-exists}), the statement of Proposition \ref{estimate} follows from
\begin{lemma}\label{explem}
Let $f:(-\infty,x_0]\to [0,\infty)$ be a function such that
\begin{itemize}
\item $|f(x)-f(y)|\leq M|x-y|$, $\forall x, \, y\leq x_0$,
\item $\int_{-\infty}^x f(t)\dd t\leq C e^{cx}$, $\forall x \leq x_0$,
\end{itemize}
where $M$, $c$ and $C$ are positive constants. Then, $f(x)\leq 2\sqrt{MC}e^{\frac{c}{2}x}$, $\forall x\leq x_0$.
\end{lemma}
\begin{proof}
Let $x\leq x_0$ be fixed and let $\lambda:=f(x)$.
For $t\in [x-\frac{\lambda}{2M},x]$, we have $f(t)\geq f(x)-M|t-x|\geq\frac{\lambda}{2}$. Thus,
$$\frac{\lambda^2}{4M}\leq\int_{x-\frac{\lambda}{2M}}^x f(t)\dd t\leq C e^{cx}\Rightarrow \lambda=f(x)\leq  2\sqrt{MC}e^{\frac{c}{2}x}.$$
\end{proof}
\end{proof}

\begin{corollary}\label{corlim}
Under the assumptions of Proposition \ref{estimate}, there exists $l \in A^-$ such that $\bar{u}(x) \to l$, as $x \to -\infty$.
\end{corollary}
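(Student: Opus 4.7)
The plan is to deduce the existence of a limit at $-\infty$ directly from the two exponential estimates provided by Proposition \ref{estimate}, and then use compactness of $A^-$ to identify the limit as a point of $A^-$.

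First, the bound $|\bar{u}'(x)|\leq K e^{kx}$ for $x\leq 0$ shows that $\bar u'$ is integrable on $(-\infty,0]$: indeed $\int_{-\infty}^{0}|\bar u'(t)|\,\dd t\leq K/k<\infty$. In particular, for any $x<y\leq 0$,
\begin{equation*}
|\bar u(x)-\bar u(y)|\leq \int_{x}^{y}|\bar u'(t)|\,\dd t\leq \frac{K}{k}\bigl(e^{ky}-e^{kx}\bigr),
\end{equation*}
so $\bar u(x)$ is a Cauchy function as $x\to-\infty$. Hence there exists $l\in\R^m$ with $\lim_{x\to-\infty}\bar u(x)=l$.

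Next, passing to the limit in $d(\bar u(x),A^-)\leq K e^{kx}$ gives $d(l,A^-)=0$. Since $A^-$ is compact (and in particular closed) by Hypothesis $\mathbf{H}_1$, we conclude $l\in A^-$. This is the entirety of the argument.

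There is essentially no obstacle here: the quantitative decay $|\bar u'(x)|\leq K e^{kx}$ from Proposition \ref{estimate} is precisely strong enough to upgrade property (ii) of Theorem \ref{conn-exists} (which only gives $d(\bar u(x),A^-)\to 0$) to the existence of a pointwise limit. Without the integrability of $\bar u'$ the limit need not exist, since $\bar u$ could drift along the submanifold $A^-$; the exponential bound rules this out.
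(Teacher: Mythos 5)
Your argument is correct and is essentially the paper's own proof: the exponential bound on $|\bar u'|$ gives integrability of $\bar u'$ near $-\infty$, hence a Cauchy condition and a limit $l$, which lies in $A^-$ by the distance estimate and the closedness of $A^-$. You have simply written out the details that the paper leaves implicit.
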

\begin{proof}
The exponential estimates provided by Proposition \ref{estimate} imply that $x\to|\bar u('x)|$ is integrable in a neighbourhood of $-\infty$. As a consequence, it is easy to see that the limit of $\bar u$ at $-\infty$ exists and belongs to $A^-$.
\end{proof}

\begin{proposition}\label{estimate2}
Assume that $A^- =\{a^-\}$, and that $W$ satisfies 
\begin{equation}\label{isol}
m |u|^2\leq W(u,v) \text{ in a neighbourhood of $(a^-,0)$, for a constant $m>0$},
\end{equation}
Then, the minimizer $\bar{u}$ constructed in Theorem \ref{conn-exists} satisfies $|\bar{u}(x)-a^-|\leq K e^{kx}$, and $|\bar{u}'(x)|\leq K e^{kx}$, $\forall x \leq 0$, for some constants $K,k>0$.
\end{proposition}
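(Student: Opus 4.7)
The plan is to follow the scheme of Proposition \ref{estimate}: construct a comparison map connecting $a^-$ at $-\infty$ to $(\bar u(x), \bar u'(x))$ at a fixed point $x$, use the minimality of $\bar u$ to bound its tail energy by a quadratic form in $|\bar u(x)-a^-|$ and $|\bar u'(x)|$, then derive a Gronwall-type inequality which, combined with Lemma \ref{explem}, yields exponential decay. The new feature compared to Proposition \ref{estimate} is that the lower bound (\ref{isol}) does not contain a $|v|^2$ term, so an additional integration by parts will be required to recover control of $\int |\bar u'|^2$ by the remaining quantities.

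First I would fix $x_0 \leq 0$ so that for every $x \leq x_0$ the pair $(\bar u(x), \bar u'(x))$ lies in the neighbourhood where (\ref{isol}) holds and also in a neighbourhood where the Taylor upper bound $W(u,v) \leq M(|u-a^-|^2 + |v|^2)$ is valid; the latter is automatic because $W \in C^2$ and $(a^-, 0)$ is a minimum of $W$, so $\nabla W(a^-, 0) = 0$ and $W(a^-, 0) = 0$. Mimicking (\ref{comparison3}) with $a(x)$ replaced by the fixed point $a^-$, I would define a comparison map $z$ satisfying $z(x) = \bar u(x)$, $z'(x) = \bar u'(x)$, $z(t) = a^-$ for $t \leq x-2$, and
\[
J_{(-\infty, x]}(z) \leq C \bigl( |\bar u(x) - a^-|^2 + |\bar u'(x)|^2 \bigr) =: C f(x),
\]
for all $x \leq x_0$. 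Minimality of $\bar u$ combined with (\ref{isol}) then yields
\[
m \int_{-\infty}^x |\bar u - a^-|^2 \, \dd t + \tfrac{1}{2} \int_{-\infty}^x |\bar u''|^2 \, \dd t \leq C f(x).
\]

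The next step is to bound $\int_{-\infty}^x |\bar u'|^2 \, \dd t$ by integration by parts, writing $\int_{-\infty}^x |\bar u'|^2 \, \dd t = \bigl[(\bar u - a^-)\cdot \bar u'\bigr]_{-\infty}^x - \int_{-\infty}^x (\bar u - a^-) \cdot \bar u'' \, \dd t$. Using properties (ii) and (iii) of Theorem \ref{conn-exists}, the boundary term at $-\infty$ vanishes, and Cauchy--Schwarz together with the two estimates above give $\int_{-\infty}^x |\bar u'|^2 \, \dd t \leq C' f(x)$. Setting $F(x) := \int_{-\infty}^x f(t) \, \dd t$, one therefore obtains the differential inequality $F(x) \leq C'' F'(x)$ on $(-\infty, x_0]$, which integrates to $F(x) \leq F(x_0) e^{c(x - x_0)}$ for some $c > 0$.

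Finally I would invoke Lemma \ref{explem} for $f$: the uniform bounds on $\bar u$, $\bar u'$ and $\bar u''$ furnished by Theorem \ref{conn-exists} make $f$ Lipschitz on $(-\infty, x_0]$, and its primitive $F$ decays exponentially, so $f(x) \leq K^2 e^{kx}$ on $(-\infty, x_0]$. Taking square roots (and adjusting the constants to extend from $x_0$ up to $0$) gives the asserted bounds $|\bar u(x) - a^-| \leq K e^{kx}$ and $|\bar u'(x)| \leq K e^{kx}$. The main delicate point is the legitimacy of the integration by parts, but it is ensured precisely by the finiteness of $\int_{-\infty}^x |\bar u - a^-|^2$ and $\int_{-\infty}^x |\bar u''|^2$ established at the previous step, so no circularity arises.
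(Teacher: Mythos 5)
Your argument is correct and follows the paper's scheme almost exactly: same comparison map (\eqref{comparison3} with $a(x)$ replaced by $a^-$), same use of minimality to get
\[
m \int_{-\infty}^x |\bar u-a^-|^2\,\dd t+\tfrac{1}{2}\int_{-\infty}^x |\bar u''|^2\,\dd t\leq C\bigl(|\bar u(x)-a^-|^2+|\bar u'(x)|^2\bigr),
\]
same Gronwall step, and same appeal to Lemma \ref{explem}. The one place where you genuinely diverge is the recovery of $\int_{-\infty}^x|\bar u'|^2$: the paper bounds it via the interpolation inequality \eqref{interpol} applied on unit subintervals (a tool it has already introduced to prove property (iii) of Theorem \ref{conn-exists}), whereas you integrate by parts, writing $\int_{-\infty}^x|\bar u'|^2=(\bar u(x)-a^-)\cdot\bar u'(x)-\int_{-\infty}^x(\bar u-a^-)\cdot\bar u''$ and estimating the right-hand side by Young and Cauchy--Schwarz against the two quantities already under control. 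Both routes are valid; yours is slightly more self-contained and elementary (no need to invoke \eqref{interpol}, and the justification of the boundary term at $-\infty$ via properties (ii) and (iii) is exactly as you say, with no circularity since the identity can first be written on $[-T,x]$ and then passed to the limit). The paper's route has the minor advantage of reusing a lemma already in place and of not requiring the pointwise decay of $\bar u'$ at $-\infty$, only square-integrability. Either way the conclusion $\theta(x)\leq c\,\theta'(x)$ and the final application of Lemma \ref{explem} to the Lipschitz function $f(x)=|\bar u(x)-a^-|^2+|\bar u'(x)|^2$ are identical.
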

\begin{proof}
We proceed as in the proof of Proposition \ref{estimate}. For $\lambda>0$ small enough, we have
\begin{equation}\label{tube2b}
m |u|^2\leq W(u,v) \leq M ( |u-a^-|^2+|v|^2), \ \forall u\in \R^m :\, |u-a^-|<\lambda, \, \forall v\in\R^m: \, |v|<\lambda.
\end{equation}
Let $x_0$ be such that $|\bar{u}(x)-a^-|<\lambda/8$, and $|\bar u'(x)|<\lambda/4$, $\forall x\leq x_0$. For fixed $x \leq x_0$, 
we set again $\phi(x):=\bar u(x)-\frac{1}{2}\bar u'(x)$. Next we consider the comparison map \eqref{comparison3}, where $a(x)$ is now replaced 
by $a^-$. This map $z$ still satisfies properties (a)-(e) in Proposition \ref{estimate}. Setting $\theta(x):=\int_{-\infty}^x (|\bar{u}(t)-a^-|^2+|\bar u'(t)|^2) \dd t$, we obtain
\begin{equation}\label{comput1bb}
m \int_{-\infty}^x |\bar u(t)-a^-|^2\dd t +\frac{1}{2} \int_{-\infty}^x |\bar u''(t)|^2\dd t \leq  J_{(-\infty,x]}(\bar{u})\leq J_{(-\infty,x]}(z)\leq C\theta'(x).
\end{equation}
Finally, we utilize the interpolation inequality \eqref{interpol}, to bound $\int_{-\infty}^x |\bar u'(t)|^2\dd t$ by a constant multiplied by the left hand-side of \eqref{comput1bb}. Hence, $\theta(x)\leq c\theta'(x)$, $\forall x\leq x_0$, and for a constant $c>0$. Then we conclude as in Proposition \ref{estimate}.
\end{proof}

\bibliographystyle{plain}

\end{document}